\documentclass[A4paper,12pt]{article}
\usepackage{latexsym}
\usepackage{mathrsfs}
\usepackage{amssymb}
\usepackage{amscd}
\usepackage[dvips]{graphicx}                  

\newtheorem{theorem}{Theorem}
\newtheorem{corollary}{Corollary}

\newtheorem{proposition}{Proposition}
\newenvironment{proof}[1][Proof]{\textbf{#1.} }{\ \rule{0.5em}{0.5em}}
\long\def\symbolfootnote[#1]#2{\begingroup%
	\def\thefootnote{$\;$}\footnote[#1]{$^*$#2}\endgroup}
\begin{document}
	
	\title{On Banach and Kuratowski Theorem and generalized strong sequences}
	\author{Joanna Jureczko\footnote{Wroc\l{}aw University of Science and Technology, Poland,			
			e-mail: joanna.jureczko@pwr.edu.pl}}
	
	\maketitle
	
	\symbolfootnote[2]{Mathematics Subject Classification: 03E02, 03E05, 03E10, 03E20, 03E35, 03E52, 03E55, 03E57, 03E65, 54E52.} 
	
	\hspace{0.2cm}
	\symbolfootnote[3]{Keywords: \textsl{Banach and Kuratowski matrix, K-Lusin set,  strong sequences, calibre, boundedness, dominating number, generalized Baire space.}}
	
	\begin{abstract}
		In 1929, Banach and Kuratowski proved under CH a combinatorial theorem, which implies that there is not  a non-vanishing $\sigma$-additive finite measure on $\mathbb{R}$ which is defined for every set of reals.    
		In 2003 Bartoszy\'nski and Halbeisen proved that Banach and Kuratowski theorem  is equivalent to the existence of a K-Lusin set of the cardinality continuum an the existence of such sets is independent of $ZFC + \neg CH$. On the other hand in 1965 Efimov introduced the strong sequences method which used to prove some well-known theorems in dyadic spaces. The aim of this paper is to show that all this notions can be generalized, i.e. considered in the generalized Baire space and to show some equivalences among them.
		Moreover,  some applications in the direction of calibres, boundedness and partitions relations are also shown.
	\end{abstract}
	\maketitle
	
	\section{Introduction}
	Based on the paper \cite{BH} and author's papers  on strong sequences \cite{JJ1, JJ2, JJOpen}, this paper will be devoted to considerations on concepts known in the literature as BK-matrix, K-Lusin set and certain cardinal invariants.
	
	In \cite{BH} the authors recall the following theorem proved by Banach and Kuratowski in 1929.
	\\
	\\
		\textbf{Banach and Kuratowski Theorem}
	\textit{Under the assumption of CH, there is an infinite matrix $A^{i}_{k} \subseteq [0,1]$ (where $i,k \in \omega$) such that}
	\\	
	\textit{(i) for each $i \in \omega, [0,1] = \bigcup_{k \in \omega} A^{i}_{k}$,}
	\\	
	\textit{(ii) for each $i \in \omega, $ if $k \not = k'$ then $A^{i}_{k} \cap A^{i}_{k'} = \emptyset$,}
	\\	
	\textit{(iii) for every sequence $k_0, k_1, ..., k_i,... $ of $\omega$ the set $$\bigcap_{i \in \omega} (A^{i}_{0} \cup A^{i}_{1} \cup ... \cup A^{i}_{k_i})$$ is at most countable. }
	\\
	\\
	The matrix from the theorem above is called in the literature as a \textit{BK-matrix}.
	\\
	
	The existence of such a matrix under the assumption (CH) is the answer to the question whether  there exists a non-vanishing $\sigma$-additive finite measure on $\mathbb{R}$ which is defined for every set of reals. 
	Obviously, the answer is negative. The theorem quoted above has further consequences, which are largely contained in the paper \cite{BH} where the authors conduct their considerations in the Polish space ${^\omega}\omega$.
	
	In the current paper, we will generalize the concepts contained in \cite{BH} to the generalized Baire space ${^\kappa}\kappa$ for $\kappa>\omega$, and demonstrate connections between the concepts from \cite{BH} and strong sequences and certain cardinal invariants known in the literature. We will also refer to the results from \cite{JJOpen} and provide their generalizations. (We also give some corrections and further explanations concerned the results from \cite{JJOpen}).
	
	There are many papers in the literature concerning cardinal invariants in ${^\omega}\omega$, and this topic seems to be quite well-studied. Recently, there appear results concerning considerations in the generalized Baire space ${^\kappa}\kappa$ when $\kappa>\omega$ is regular. This notion was introduced in \cite{H}. It turns out that some of the notions considered in the case $\kappa = \omega$ can be easily generalized to the case $\kappa>\omega$  but not all of them, see e.g \cite{HS, FS}. Additional assumptions are then required, e.g., $\kappa = \kappa^{< \kappa}$. Interestingly, considerations in the case of $\kappa>\omega$ allow us to solve long-unsolved problems for $\kappa = \omega$, (e.g. Roitman's problem).
	The paper \cite{FS} also shows results for $\kappa>\omega$, which are false for $\kappa=\omega$.
	
	In 1995, in \cite{CS}  invariants in the space ${^\kappa}\kappa$ for $\kappa>\omega$ were considered for the first time. In subsequent papers on this topic, especially those co-authored by Shelah, the topic was gradually developed.
	
	An interesting paper in the literature is  \cite{KLLS}, in which the authors summarized the current knowledge (as of the year of publication) regarding research on ${^\kappa}\kappa$ for $\kappa>\omega$. That paper is the result of two seminars devoted to this topic: in Amsterdam in 2014 and in Hamburg in 2015.
		Other interesting papers on this subject are \cite{RS, RS1} and the continuation in \cite{FS}, in which one can also find a historical overview on this topic.
		
The strong sequences which are mentioned in the second part of the title of this paper were introduced in \cite{BE} as a useful method for proving a number of famous theorem in dyadic spaces (i.e. continuous images of the Cantor cube) were  adopted by M. Turza\'nski and successfully developed  in \cite{JJ1, JJ2, JJ3, JJ4, JJ5}. Especially, the results contained in \cite{JJ5} shows that strong sequences can be helpful for obtaining new inequalities between cardinal invariants.    
	
The notation used in this paper is standard for the fields of topology and sets theory. For definitions and fact not cited here, we refer to e.g. \cite{TJ} (set theory) and	\cite{RE} (topology).

	\section{Definitions and some auxiliary results}
	In the whole paper we assume $ZFC$ in every place whenever we do not assume otherwise.
	 
	\subsection{Generalized Baire space}
	Following \cite{KLLS}, we assume the following definition. Let $\kappa> \omega$ be a regular cardinal. We say that $(^{\kappa}\kappa, \tau)$ with the topology $\tau$ generated by the basic open sets of the form 
	$$[s] = \{f \in ^{\kappa}\kappa \colon f\upharpoonright |s| = s\}$$
	with $s \in \kappa^{\kappa^{< \kappa}}$
	is the \textit{generalized Baire space} and the topology $\tau$ is the \textit{bounded topology}.
	
	For the study of the generalized Baire space from a topological point of view the assumption that $\kappa^{<\kappa} = \kappa$  nearly always cannot be omitted, see \cite[II.2.1]{FHK}.

We add that a generalized Cantor space $2^\kappa$ with the bounded topology one defines analogously.
 
\subsection{Some cardinal invariants in $^{\kappa}\kappa$}

In ${^\kappa}\kappa$ we consider the following relation, if $f, g\in {^\kappa}\kappa$ then $$g \preceq_\kappa f \textrm{ iff }  |\{\alpha \in \kappa \colon g(\alpha) > f(\alpha)\}|< \kappa.$$
Because in the whole paper we will work in ${^\kappa} \kappa$ we hope that it will not to come to missunderstanding when we will write $\preceq$ instead of $\preceq_\kappa$.

Let $f, g \in {^\kappa} \kappa$. We say that $f$ dominates $g$ iff $g \preceq f$. We say that $f \in {^\kappa} \kappa$ \textit{dominates} $\mathcal{G} \subseteq {^\kappa} \kappa$ iff $g \preceq f$ for each $g \in \mathcal{G}$.

For any $\mathcal{F} \subseteq {^\kappa} \kappa$ let 
$$\lambda_\kappa (\mathcal{F}) = \min \{\tau \colon \forall_{g \in {^\kappa}\kappa}\  |\{f \in \mathcal{F} \colon f \preceq g\}| < \tau\},$$
$$\mathfrak{l}_\kappa = \min \{\lambda_\kappa (\mathcal{F}) \colon \mathcal{F} \subseteq {^\kappa}\kappa \wedge |\mathcal{F}| =2^\kappa\}.$$
It is obvious that if $\mathcal{F}$ has size $2^\kappa$, then $\kappa^+ \leqslant \lambda_\kappa (\mathcal{F}) \leqslant (2^\kappa)^+$.

A family $\mathcal{B}\subseteq {^\kappa} \kappa$ is called \textit{unbounded} if for all $g \in {^\kappa} \kappa$ there is $f \in \mathcal{B}$ such that $f \not \preceq g$.
 A family $\mathcal{D} \subseteq {^\kappa} \kappa$ is called \textit{dominating} if for all $g \in {^\kappa} \kappa$ there is $f \in \mathcal{D}$ such that $g \preceq f$. 
 
 Define
 $$\mathfrak{b}_\kappa = \min \{|\mathcal{B}| \colon \mathcal{B} \subseteq {^\kappa} \kappa \textrm{ is an unbounded family}\}.$$
 
 $$\mathfrak{d}_\kappa = \min \{|\mathcal{B}| \colon \mathcal{B} \subseteq {^\kappa} \kappa \textrm{ is a dominating family}\}$$
 and say that $\mathfrak{b}_\kappa$ is a \textit{bounding family} and $\mathfrak{d}_\kappa$ is a \textit{dominating family}.
  
By a generalized Cicho\'n diagram, see \cite{KLLS} we have
Obviously $$\mathfrak{b}_\kappa \leqslant \mathfrak{d}_\kappa$$
$$\kappa^+ \leqslant add(\mathcal{M}_\kappa) \leqslant \mathfrak{b}_\kappa \leqslant non(\mathcal{M}_\kappa) \leqslant cof(\mathcal{M}_\kappa) \leqslant 2^\kappa$$ and
$$\kappa^+ \leqslant add(\mathcal{M}_\kappa) \leqslant cov(\mathcal{M}_\kappa) \leqslant \mathfrak{d}_\kappa \leqslant cof(\mathcal{M}_\kappa) \leqslant 2^\kappa$$

	\subsection{Generalized BK-matrix}
	Let $\lambda, \eta$ and $\nu$ be infinite cardinals.
	A family $\mathcal{A} = \{\{A_{\alpha}^{\beta} \colon \alpha < \lambda\} \colon \beta < \eta \}$ of subsets of a set $S$ with $|S|= \lambda$ is called a \textit{($\lambda, \eta, \nu$)-$BK$-matrix} if $\eta \leqslant \lambda$ and $\nu < \lambda$ 
\begin{enumerate}
	\item  $ \bigcup_{\alpha < \lambda} A^{\beta}_{\alpha} = S$ for any $\beta < \eta$,

	\item $A^{\beta}_{\alpha} \cap A^{\beta}_{\alpha'} = \emptyset$ for any $\beta < \eta$ and $\alpha, \alpha' < \lambda$ with $\alpha \not = \alpha'$, 
	
	\item   $|\bigcap_{\beta < \eta} \bigcup_{\alpha =0}^{\gamma_\beta}A^{\beta}_{\xi_\beta}|\leqslant \nu$  for any sequence $(\gamma_\beta)_{\beta< \nu}$.
	\end{enumerate}
	
	If $\lambda = \nu^+$, then we call this matrix ($\lambda, \eta$)-$BK$-matrix  and if moreover $\lambda = \eta$ then we call it $\lambda$-$BK$-matrix.

	\subsection{Generalized strong sequence}
	
	Let $(S, r)$ be a set with a relation defined on $[S]^2$. Let $\lambda$ be a cardinal such that $|S| \geqslant \eta$.
	We say that a subset $A \subseteq S$ is \textit{$\eta$-directed}  if there exists $b \in S$ such that $arb$ for any $a \in A$.
	
	A sequence $(S_\xi, H_\xi)_{\xi < \Phi}$ is called \textit{$\eta$-strong sequence} if $|S_\xi|<\eta$ for any $\eta < \Phi$ and 
	\begin{enumerate}
		\item $S_\xi \cup H_\xi$ is $\eta$-directed,
		\item$S_\zeta \cup H_\xi$ is not $\eta$-directed, whenever $\xi < \zeta$.
	\end{enumerate}
	
	Observe that if $\kappa< \eta$ and if $(S_\xi, H_\xi)_{\xi < \Phi}$ is  $\eta$-strong sequence then  $(S_\xi, H_\xi)_{\xi < \Phi}$ is  $\kappa$-strong sequence. The converce implication is not true.
	\\
	
	If we consider the space ${^\kappa}\kappa$ we will interpret the above notion as follows. Let $\eta \leqslant |{^\kappa}\kappa|$. 
	A sequence $(S_\xi, H_\xi)_{\xi < \Phi}$ is called \textit{$\eta$-strong sequence} if $|S_\xi|< \eta$ for any $\xi < \Phi$ and 
	\begin{enumerate}
		\item  $H_\xi \subseteq \{g \in {^\kappa}\kappa \colon g \preccurlyeq f_\xi\}$ for $f_\xi \in {^\kappa}\kappa$ dominating $S_\xi$,
 		\item $\exists_{f_\zeta \in S_\zeta}$ $H_\xi \not\subseteq \{g \in {^\kappa}\kappa \colon g \preccurlyeq f_\zeta\}$, whenever $\xi < \zeta$.
	\end{enumerate}

	To avoid misunderstading we assume that $f_\xi$ in the above definition belongs to $S_\xi$\footnote{The reviewer of \cite{JJOpen} in MathScieRev changed the definition of strong sequences and consequently obtained fail theorems.}.

		In the generalized strong sequences we can associated the following cardinal invariant
	$$\hat{\mathfrak{s}}_\kappa = \sup\{\theta \colon \textrm{ there exists an $\eta$-strong sequence of length } \theta \textrm{ in } {^\kappa}\kappa \}.$$

			\subsection{Generalized K-Lusin set}
		
		A set $A \subseteq {^\kappa}\kappa$ is called \textit{$\kappa$-meager} if it is the union of at most $\kappa$ nowhere dense  (in the bounded topology) sets. 
		A set $L\subseteq {^\kappa}\kappa$ of size $2^{\kappa}$ is a \textit{$\kappa$-K-Lusin set} if $|X \cap K| \leqslant \kappa$ for every $\kappa$-meager set $K \subseteq {^\kappa}\kappa$.
		
		A set $K \subseteq {^\kappa}\kappa$ is said to be \textit{closed} iff for all $f \in K$ and for all $g \in {^\kappa}\kappa$  if $f \preceq g$ then $g \in K$.
		A closed set $K \subseteq {^\kappa}\kappa$ is \textit{$\kappa$-compact} iff there is a function $f \in {^\kappa}\kappa$ such that $K \subseteq \{g \in {^\kappa}\kappa \colon g \preceq f \}$.
		A set $X$ of size $2^{\kappa}$ is a \textit{$\kappa$-K-Lusin set} if $|X \cap K|\leqslant \kappa$ for every $\kappa$-compact set $K \subseteq {^\kappa}\kappa$.

		The next result was proved in the case $\kappa = \omega$ in \cite{BH}. 
		
		\begin{proposition}
			Every $\kappa$-Lusin set is a $\kappa$-K-Lusin set.
		\end{proposition}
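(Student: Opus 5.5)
The plan is to show that every $\kappa$-compact set is $\kappa$-meager. The proposition then follows at once: if $L$ is a $\kappa$-Lusin set (that is, $|L \cap K| \leqslant \kappa$ for every $\kappa$-meager $K$) and $K$ is $\kappa$-compact, then $K$ is $\kappa$-meager, so $|L\cap K|\leqslant\kappa$. Also $|L| = 2^\kappa$ by definition, so $L$ is a $\kappa$-K-Lusin set. Only the containment $K \subseteq B_f := \{g \in {^\kappa}\kappa \colon g \preceq f\}$ from the definition of $\kappa$-compactness is used; the (upward) ``closedness'' clause plays no role. Since a subset of a union of $\kappa$ nowhere dense sets is again such a union (intersect each piece with $K$; subsets of nowhere dense sets are nowhere dense), it suffices to prove that $B_f$ is $\kappa$-meager for each fixed $f \in {^\kappa}\kappa$.

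For this I will use the regularity of $\kappa$. For $g \in {^\kappa}\kappa$, the condition $g \preceq f$ says that $E_g = \{\alpha<\kappa \colon g(\alpha) > f(\alpha)\}$ has size $<\kappa$. Since $\kappa$ is regular, this is equivalent to $E_g$ being bounded in $\kappa$. Hence
$$B_f = \bigcup_{\beta<\kappa} C_\beta, \qquad C_\beta = \{g \in {^\kappa}\kappa \colon \forall_{\alpha \geqslant \beta}\ g(\alpha) \leqslant f(\alpha)\}.$$
This is a union of $\kappa$ many sets, so the remaining task is to check that each $C_\beta$ is nowhere dense in the bounded topology.

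\textbf{Step 1: $C_\beta$ is closed.} If $g \notin C_\beta$, pick $\alpha \geqslant \beta$ with $g(\alpha) > f(\alpha)$. Then the basic open set $[g\upharpoonright(\alpha+1)]$ contains $g$ and is disjoint from $C_\beta$.

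\textbf{Step 2: $C_\beta$ has empty interior.} Given any basic open set $[s]$ with $s$ of length $<\kappa$, let $\alpha = \max(|s|,\beta)$. Extend $s$ to some $t$ of length $\alpha+1$ with $t(\alpha) = f(\alpha)+1$; this is possible since $f(\alpha)+1<\kappa$. Then $[t] \subseteq [s]$ and $[t]\cap C_\beta = \emptyset$.

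So every nonempty open set contains a nonempty basic open set missing the closed set $C_\beta$, and hence $C_\beta$ is nowhere dense. This gives that $B_f$, and therefore every $\kappa$-compact $K$, is $\kappa$-meager.

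The only step needing any care is the reduction from ``fewer than $\kappa$ exceptions'' to ``exceptions bounded below some $\beta<\kappa$''. This is exactly where regularity of $\kappa$ enters, and it is what lets $B_f$ be written as a union of only $\kappa$ pieces. I do not expect any use of $\kappa^{<\kappa}=\kappa$ to be needed.
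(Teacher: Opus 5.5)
Your proof is correct and takes the same route as the paper. The paper's entire argument is the remark that every $\kappa$-compact set is $\kappa$-meager; you supply the verification it omits, using regularity of $\kappa$ to write $\{g \colon g \preceq f\}$ as the union of the $\kappa$ many closed nowhere dense sets $C_\beta$.
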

		\begin{proof}
			The proof is an easy consequence of definitions of both notions because every $\kappa$-compact set is $\kappa$-meager. 
		\end{proof}
\\
		
Unfortunately, the reverse claim is not true, because they can exist $\kappa$-meager sets which are not $\kappa$-compact. Therefore, results from the literature that tell us when the Lusin set does not exist are not useful to us.
What we can definitely say is that, assuming $MA_\kappa +\neg CH_\kappa$ (i.e., $MA+\neg CH$ for $\kappa>\omega$), there is no $\kappa$-K-Lusin set with cardinality $2^\kappa$ and consequently nor notion in Theorem 1.
		
	\subsection{Generalized concentrated set}
	
	A set $A \subseteq {^\kappa}\kappa$ is called \textit{$\kappa$-concentrated} on a set $B \subseteq {^\kappa}\kappa$ if  every open set (in the bounded topology) $G \subseteq {^\kappa}\kappa$ such that $B \subseteq G$ contains all but $\kappa$ many elements of $A$.

	\section{Main results}
	
	\begin{theorem}  Let $\kappa > \omega$ be a regular cardinal such that $\kappa = \kappa^{<\kappa}$ and let $\lambda =  cf(2^\kappa)$. Let $X={^\kappa}\kappa$. Then the following are equivalent:
\begin{itemize}
	\item [(i)] there exists a $\kappa^+$-strong sequence $(S_\xi, H_\xi)_{\xi < \lambda}$ in $X$ such that $|H_\xi|\leqslant \kappa$ for each $\xi < \lambda$; 
\item [(ii)] there exists a $\kappa$-BK matrix in $X$;
\item [(iii)]  $\mathfrak{l}_\kappa = \kappa^+$;
\item [(iv)] There exists a $\kappa$-concentrated set of cardinality $\lambda$ in $X$;
\item [(v)] there exists a $\kappa$-K-Lusin set of cardinality $\lambda$ in $X$.
\end{itemize}
\end{theorem}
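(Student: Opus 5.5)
We propose to prove the equivalences by following the scheme of Bartoszy\'nski and Halbeisen \cite{BH} for $\kappa=\omega$, lifted to ${^\kappa}\kappa$. The hypothesis $\kappa=\kappa^{<\kappa}$ supplies the combinatorics of the bounded topology. The order is (v)$\Rightarrow$(iii)$\Rightarrow$(ii)$\Rightarrow$(v), then (v)$\Leftrightarrow$(iv) and (iii)$\Leftrightarrow$(i).

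\textbf{(v)$\Rightarrow$(iii).} Let $L$ be a $\kappa$-K-Lusin set. For every $g$, the set $\{f\in L\colon f\preceq g\}$ lies in a $\kappa$-compact set, so it has size $\leqslant\kappa$. Hence $\lambda_\kappa(L)\leqslant\kappa^+$, and $\lambda_\kappa(L)=\kappa^+$ by the remark after the definition of $\mathfrak{l}_\kappa$. Where the definition requires $|L|=2^\kappa$, we would use the definition of a $\kappa$-K-Lusin set, which already demands size $2^\kappa$. We would read the ``cardinality $\lambda$'' in (iv) and (v) as meaning a set of size $2^\kappa$ that is $\lambda$-indexed cofinally.

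\textbf{(iii)$\Rightarrow$(v).} Take $\mathcal F$ with $|\mathcal F|=2^\kappa$ and $\lambda_\kappa(\mathcal F)=\kappa^+$. Any $\kappa$-compact $K$ sits below some $f$, and $\mathcal F\cap K\subseteq\{h\in\mathcal F\colon h\preceq f\}$, which has size $\leqslant\kappa$. So $\mathcal F$ is itself $\kappa$-K-Lusin.

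\textbf{(iii)$\Rightarrow$(ii).} Enumerate $\mathcal F=\{f_s\colon s\in S\}$ and set $A^\beta_\alpha=\{s\colon f_s(\beta)=\alpha\}$ for $\beta,\alpha<\kappa$. Clauses 1 and 2 are immediate. For clause 3, fix a sequence $(\gamma_\beta)$. The intersection is $\{s\colon f_s(\beta)\leqslant\gamma_\beta \text{ for all } \beta\}$, whose members are all $\preceq g$ where $g(\beta)=\gamma_\beta$. Therefore it has size $\leqslant\kappa$. There is an indexing mismatch: $\lambda$ in the matrix definition is $|S|$. We would resolve it by taking $S$ of size $\kappa^+$ as a subfamily, or by reading the $\kappa$-BK matrix as a $(\kappa^+,\kappa,\kappa)$-matrix as the abbreviation convention suggests.

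\textbf{(ii)$\Rightarrow$(v).} Conversely, from a matrix define $f_s(\beta)$ to be the unique $\alpha$ with $s\in A^\beta_\alpha$. Clause 3 says each bounded-below set is small, first everywhere and then up to a set of size $<\kappa$. The main obstacle is passing from everywhere-bounds to $\preceq$-bounds. Using $\kappa^{<\kappa}=\kappa$ and regularity, we would write $\{s\colon f_s\preceq g\}$ as a union over $\kappa$ many pairs $(\delta,t)$, with $\delta<\kappa$ and $t\in{^\delta}\kappa$, of sets of the form ``$f_s\restriction\delta=t$ and $f_s(\beta)\leqslant g(\beta)$ for $\beta\geqslant\delta$.'' Each such set is contained in an intersection controlled by clause 3. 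This gives size $\leqslant\kappa\cdot\kappa=\kappa$.

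\textbf{(v)$\Leftrightarrow$(iv).} For (v)$\Rightarrow$(iv), we follow \cite{BH}. We embed ${^\kappa}\kappa$ into $2^\kappa$ by coding $f$ as a point of $2^\kappa$ with $\kappa$ many $1$'s, and let $B$ be the set of eventually-zero sequences. Given an open $G\supseteq B$, its complement is closed and disjoint from $B$, and we need to see that its trace on the coded copy is $\kappa$-compact. This holds because the least level of leaving each basic neighbourhood of a point of $B$ defines a bounding function; here $\kappa^{<\kappa}=\kappa$ guarantees there are only $\kappa$ many such neighbourhoods. For (iv)$\Rightarrow$(v), the argument reverses: each set $\{g\colon g\preceq f\}$ is, modulo changes on $<\kappa$ coordinates (again only $\kappa$ many), disjoint from an open neighbourhood of $B$.

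\textbf{(iii)$\Leftrightarrow$(i).} Given $\mathcal F$ witnessing (iii), we build the strong sequence by recursion on $\xi<\lambda$, taking $S_\xi=\{f_\xi\}$ and $H_\xi=\{h\in\mathcal F\colon h\preceq f_\xi\}$. This $H_\xi$ has size $\leqslant\kappa$. At stage $\zeta$ we choose $f_\zeta$ so that it fails to bound some element of each earlier $H_\xi$. That is possible because $\bigcup_{\xi<\zeta}H_\xi$ has size $<2^\kappa$ and unbounded choices remain, using $\lambda=cf(2^\kappa)$ to prevent the recursion from stalling. Conversely, from (i) we take the functions $f_\xi$ together with witnesses from the $H_\xi$, which form a family each of whose $\preceq$-initial segments meets only $\kappa$ many $H$'s; this yields $\lambda_\kappa=\kappa^+$. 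We expect this step and the (ii)$\Rightarrow$(v) bookkeeping to be the delicate points.
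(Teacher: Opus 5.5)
Your proposal does not close the cycle. The row/coordinate coding between (ii) and (iii) and your $(\delta,t)$-decomposition via $\kappa^{<\kappa}=\kappa$ are sound, and cleaner than the paper's. The link to clause (i) is the decisive problem. Your (i)$\Rightarrow$(iii) step asserts that the $f_\xi$, together with witnesses from the $H_\xi$, form a family whose $\preceq$-initial segments each meet only $\kappa$ many $H_\xi$. Nothing in the definition of a strong sequence gives this. Condition 2 only says that each later $f_\zeta$ fails to dominate some member of each earlier $H_\xi$; it says nothing about how many $H_\xi$ an arbitrary $g$ dominates. Here is a concrete counterexample. Since $2^{<\kappa}=\kappa$, branches through $2^{<\kappa}$ give an almost disjoint family $\{A_\xi\colon\xi<\lambda\}$ of subsets of $\kappa$ of size $\kappa$. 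Let $h_\xi$ be the characteristic function of $A_\xi$ and put $S_\xi=H_\xi=\{h_\xi\}$, $f_\xi=h_\xi$. Then $h_\xi\not\preceq h_\zeta$ for $\xi\neq\zeta$, so this is a $\kappa^+$-strong sequence of length $\lambda$ with $|H_\xi|=1$, built in ZFC. Yet every $h_\xi$ lies below the constant function $1$. On the other hand, (iii) implies $\mathfrak{b}_\kappa=\kappa^+$, because any $\kappa^+$ members of a witness form an unbounded family, and $\mathfrak{b}_\kappa=\kappa^+$ is not a ZFC theorem. So with the definition as stated, (i)$\Rightarrow$(iii) cannot be proved at all. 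If instead you read condition 2 as ``$S_\zeta\cup H_\xi$ has no upper bound'', then (i) is outright false, since sets of size $\leqslant\kappa$ are always bounded. The paper's (i)$\Rightarrow$(ii) breaks at the same spot: its closing ``since $f_\xi\in H_\xi$ and $|H_\xi|\leqslant\kappa$, we have our claim'' does not follow, so there is nothing there to borrow. Your recursion for (iii)$\Rightarrow$(i) is also unjustified as written. Suppose the witness $\mathcal F$ is a $\preceq$-increasing scale of length $\kappa^+=2^\kappa$. Then each $H_\xi=\{h\in\mathcal F\colon h\preceq f_\xi\}$ is an initial segment of the scale, and condition 2 says exactly $H_\xi\not\subseteq H_\zeta$ for $\xi<\zeta$. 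So these initial segments must strictly decrease, which cannot continue for $\omega$ steps. The count $|\bigcup_{\xi<\zeta}H_\xi|<2^\kappa$ does not address this.

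The second gap is (v)$\Rightarrow$(iv). For $\kappa=\omega$, Bartoszy\'nski and Halbeisen get boundedness of a closed set missing the eventually-zero sequences from compactness (K\"onig's lemma). Your substitute only yields a node-dependent bound $f(\alpha)<h(f\upharpoonright\alpha)$. Turning it into one bounding function needs $\sup\{h(s)\colon s\textrm{ on level }\alpha\}<\kappa$, but $\kappa^{<\kappa}=\kappa$ only gives $\kappa$ many nodes, so the supremum can be $\kappa$. This works for inaccessible $\kappa$ but fails in general. Take $\kappa=\omega_1$ under CH and fix a bijection $e\colon 2^\omega\to\omega_1$. Let $C$ be the set of $f$ with values in $\{0,1\}$ except at the coordinates $\omega(\beta+1)$, where $f(\omega(\beta+1))=e(\langle f(\omega\beta+n+1)\colon n<\omega\rangle)$. $C$ is closed and every node of its tree has at most two successor values. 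Hence no point of $B$ lies in the closure of its code, and the complement of the code is an open set containing $B$. Now define the injection $\psi$ into $C$ by $\langle\psi(f)(\omega\beta+n+1)\colon n<\omega\rangle=e^{-1}(f(\beta))$, with $0$ at the remaining free coordinates. It satisfies $\psi(f)(\omega(\beta+1))=f(\beta)$, so $\psi(f)\preceq g$ implies $f\preceq g'$ where $g'(\beta)=g(\omega(\beta+1))$. Thus $\psi$ carries any $\kappa$-K-Lusin set to a $\kappa$-K-Lusin set inside $C$, which is not concentrated on $B$. The paper asserts this implication without argument, so it offers no repair. Finally, do not resolve the size mismatch with a matrix on a set of size $\kappa^+$. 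That version of (ii) already follows from $\mathfrak{b}_\kappa=\kappa^+$, and your (ii)$\Rightarrow$(v) could then not produce a set of size $2^\kappa$. You need $|S|=2^\kappa$ throughout, together with the observation that the fibres of $s\mapsto f_s$ have size $\leqslant\kappa$.
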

\begin{proof}
	(In the proof we use some ideas from \cite{BH}).
	
$[(i)] \Rightarrow [(ii)]$
Let $(S_\xi, H_\xi)_{\xi < \lambda}$ such that $|H_\xi|\leqslant \kappa$ for each $\xi < \lambda$ be a $\kappa^+$-strong sequence. By the definition of $\kappa^+$-strong sequence we have that $S_\zeta \cup H_\xi$ is not $\kappa^+$-directed. Hence for each $\xi< \zeta$ there exists  $f_\xi \in S_\xi$    such that $\{f_\zeta\} \cup H_\xi$ is not $\kappa^+$-directed  whenever $\xi<\zeta<\lambda$.
Let 
$$H=\{f_\xi \colon f_\xi \in S_\xi, \xi< \lambda\}.$$ 
Let $\Lambda \colon \lambda \to H$ be a bijection such that $\Lambda(\xi) = f_\xi$, $\xi < \lambda$.
For each $\alpha < \kappa$  define
$A^\alpha_\eta \in P(\lambda)$ as follows
$$\xi \in A^\alpha_\eta \textrm{ iff } \eta = \Lambda^\alpha(\xi).$$
Obviously, the sets $A^\alpha_\eta$, $\alpha, \eta< \kappa$ fulfills (1-2) of definition of $\kappa$-BK-matrix. To complete this part of the proof it is enough to show that (3) of definition of $\kappa$-BK-matrix is also fulfilled. 

For this purpose, choose arbitrary sequence 
$(\eta_\beta)_{\colon \beta < \kappa}$ of $\kappa$ and consider a set
 $$\bigcap_{\beta < \kappa}\bigcup_{\gamma < \eta_\beta} A^\beta_\gamma.$$
 If $\bigcap_{\beta < \kappa}\bigcup_{\gamma < \eta_\beta} A^\beta_\gamma = \emptyset$ then (3) is fulfilled.
 
 If $\bigcap_{\beta < \kappa}\bigcup_{\gamma < \eta_\beta} A^\beta_\gamma\not = \emptyset$ then take arbitrary $\zeta \in\bigcap_{\beta < \kappa}\bigcup_{\gamma < \eta_\beta} A^\beta_\gamma$. Then $\zeta \in \bigcup_{\gamma < \eta_\beta} A^\beta_\gamma$ for each $\beta < \kappa$. It means that $\Lambda^\beta(\xi) \leqslant \eta^\beta$ for each $\beta < \kappa$. Since $\Lambda(\xi) = f_\xi$ and $f_\xi \in H_\xi$ and $|H_\xi|\leqslant \kappa$, we have our claim. 

$[(ii)] \Rightarrow [(i)]$ Let $\{A^\alpha_\eta \in P(\lambda)$, for each $\alpha, \eta < \kappa\}$ be a $\kappa$-BK-matrix. Let $\mathcal{F} \subseteq {^\kappa}\kappa$ be the family of all functions such that $\bigcap_{\alpha< \kappa} A^\alpha_{f(\alpha)} \not = \emptyset$.
Obviously $|\mathcal{F}| = 2^\kappa$. 

Take arbitrary sequence $(\eta_\beta)_{\beta< \alpha}$ of $\kappa$. By (3)  of the definition of $\kappa$-BK-matrix
$$|\bigcap_{\beta < \kappa}\bigcup_{\gamma < \eta_\beta} A^\beta_\gamma|\leqslant \kappa.$$
Hemce, for each $g \in {^\gamma}\gamma$  such that $g(\alpha) = \eta_\beta$ 
$$|\{f \in \mathcal{F} \colon f \preccurlyeq g\}|\leqslant \kappa.$$
Hence $\lambda(\mathcal{F}) = \kappa^+$.

$[(iii)] \Leftrightarrow [(iv)]$ straightforward the definitions.

$[(iv)] \Rightarrow [(v)]$
Let $D \subseteq \lambda$ be a dense set of the cardinality $\kappa$. Take a homeomorphism $\varphi \colon \lambda \setminus D \to {^\kappa}\kappa$. Then for each $\kappa$-compact set $U \subseteq {^\kappa}\kappa$, $\varphi^{-1}[U]$ is $\kappa$-compact. and $\kappa^+ \setminus \varphi^{-1}[U]$ is an open set containing $D$. For any $A \subseteq \lambda$ of cardinality $\eta$ if $\varphi[A]$ is $\kappa$-concentrated in $D$ then $\varphi[A]$ is a $\kappa-K-$Lusin set and $|\varphi[A]|=\eta$.

$[(v)] \Rightarrow [(iv)]$
Let $D \subseteq \lambda$ be a dense set of cardinality $\kappa$. Take a homeomorphism $\varphi \colon \lambda \setminus D \to {^\kappa}\kappa$.
Let $B$ be a $\kappa$-K-Lusin set of the cardinality $\lambda$. Then $\varphi^{-1}[B]$ is $\kappa$-concentrated on $D$ and $\varphi^{-1}[B]$ has the cardinality $\lambda.$

$[(v)] \Rightarrow [(ii)]$
Let $A \subseteq {^\kappa}\kappa$ be a $\kappa$-K-Lusin set of cardinality $\lambda$. 
Enumerate
$$A = \{f_\xi \in {^\kappa}\kappa \colon \xi < \lambda\}.$$
Let $K_0 \subseteq {^\kappa}\kappa$ be a $\kappa$-compact set. By the definition of $\kappa$-K-Lusin set there exists $f_0 \in {^\kappa}\kappa$ such that $K_0 \subseteq \{g \in {^\kappa}\kappa \colon g\preccurlyeq f_0\}.$
Let $S_0 = \{f_0\}$ and $H_0 = A\cap K_0$. Since $A$ is a $\kappa$-K-Lusin set, $|H_0| \leqslant \kappa$. Let $(S_0, H_0)$ be the first pair of a $\kappa^+$-strong sequence $(S_\xi, H_\xi)_{\xi < \lambda}$. 

Assume that we have defined the $\kappa^+$-strong sequence $(S_\xi, H_\xi)_{\xi< \eta}$ for some $\eta < \lambda$ such that $S_\xi = \{f_\xi\}$ and $H_\xi = A\cap K_\xi$, where $K_\xi \subseteq \{g \in {^\kappa}\kappa \colon g\preccurlyeq f_\xi\}$ is $\kappa$-compact. Since $\eta < \lambda$ then 
$$A \setminus \bigcup\{H_\xi \colon \xi < \eta\} \not = \emptyset.$$  
Hence we can continue the construction of the $\kappa^+$-strong sequence $(S_\xi, H_\xi)_{\xi < \lambda}$.

Case 1. $\eta = \xi+1$. 
Let $f_\eta = A \setminus \bigcup\{H_\xi \colon \xi < \eta\}$ be such that 
$\{f_\xi\}\cup\{f_\eta\}$ is not $\kappa^+$-directed whenever $\xi < \eta$.
Let $K_\eta \subseteq \{g \in {^\kappa}\kappa \colon g\preccurlyeq f_\eta\}$ be a $\kappa$-compact set.
Let $S_\eta  = \{f_\eta\}$. By choosing $f_\eta$ we have that $S_\eta \cup H_\xi$ is not $\kappa^+$-directed for any $\xi < \eta$. 
Let $H_\eta = A \cap K_\eta$. Since $A$ is a $\kappa$-K-Lusin set, $|H_\eta|\leqslant \kappa$. Let $(S_\eta, H_\eta)$ be the next pair of the $\kappa^+$-strong sequence.

Case 2. $\eta$ is limit. Then $S_\eta = \bigcap_{\xi< \eta}S_\xi$ and $H_\eta = \bigcap_{\xi < \eta}H_\xi$. Let $(S_\eta, H_\eta)$ be the next pair of the $\kappa^+$-strong sequence $(S_\xi, H_\xi)_{\xi < \lambda}$.

Hence we have defined the $\kappa^+$-strong sequence $(S_\xi, H_\xi)_{\xi < \lambda}$.
\end{proof}

\begin{corollary}
	Assume GCH.	Let $\kappa > \omega$ be a regular cardinal such that $\kappa = \kappa^{<\kappa}$. Let $X={^\kappa}\kappa$. Then the following are equivalent:
	\begin{itemize}
		\item [(i)] there exists a $\kappa^+$-strong sequence $(S_\xi, H_\xi)_{\xi < \lambda}$ in $X$ such that $|H_\xi|\leqslant \kappa$ for each $\xi < 2^\kappa$; 
		\item [(ii)] there exists a $\kappa$-BK matrix in $X$;
		\item [(iii)]  $\mathfrak{l}_\kappa = 2^\kappa$;
		\item [(iv)] There exists a $\kappa$-concentrated set of cardinality $2^\kappa$ in $X$;
		\item [(v)] there exists a $\kappa$-K-Lusin set of cardinality $2^\kappa$ in $X$.
	\end{itemize}
\end{corollary}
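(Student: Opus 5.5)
The plan is to derive the corollary directly from Theorem 1 by computing $\lambda = cf(2^\kappa)$ under GCH. The key step is that GCH gives $2^\kappa = \kappa^+$, which is a successor cardinal and therefore regular. Hence $\lambda = cf(2^\kappa) = 2^\kappa = \kappa^+$. The hypotheses of Theorem 1 ($\kappa>\omega$ regular, $\kappa=\kappa^{<\kappa}$) are assumed verbatim. Under GCH the assumption $\kappa^{<\kappa}=\kappa$ in fact holds automatically for regular $\kappa$, but we do not need this.

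Next I will substitute $\lambda = 2^\kappa$ into each clause of Theorem 1 and check that it becomes the matching clause of the corollary.
\begin{itemize}
\item Clause (i): the strong sequence now has length $\lambda = 2^\kappa$, so the index conditions ``$\xi<\lambda$'' and ``$\xi<2^\kappa$'' coincide.
\item Clauses (iv) and (v): the cardinality $\lambda$ becomes $2^\kappa$.
\item Clause (ii): this does not involve $\lambda$, so it is unchanged.
\end{itemize}
Once these identifications are made, the chain of implications proved for Theorem 1 transfers unchanged.

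The only point needing care is clause (iii). Theorem 1 states $\mathfrak{l}_\kappa = \kappa^+$, while the corollary states $\mathfrak{l}_\kappa = 2^\kappa$. Under GCH these are literally the same statement, since $2^\kappa=\kappa^+$. I will note this explicitly. I will also recall the observation made after the definition of $\mathfrak{l}_\kappa$, that $\kappa^+\leqslant\lambda_\kappa(\mathcal F)\leqslant(2^\kappa)^+$ for families of size $2^\kappa$, so the value $2^\kappa$ is the minimal one possible and the two formulations agree.

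I expect no substantial obstacle. The main thing to get right is the cardinal arithmetic: the regularity of successor cardinals, which uses AC, and the identity $2^\kappa=\kappa^+$.
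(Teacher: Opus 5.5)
Your proposal is correct and matches the paper, which gives no separate proof and obtains the corollary by specializing Theorem 1. Under GCH, $2^\kappa=\kappa^+$ is regular, so $\lambda=cf(2^\kappa)=2^\kappa$, and clause (iii), $\mathfrak{l}_\kappa=2^\kappa$, is literally $\mathfrak{l}_\kappa=\kappa^+$. Your appeal to the bounds $\kappa^+\leqslant\lambda_\kappa(\mathcal F)\leqslant(2^\kappa)^+$ is unnecessary but harmless.
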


	\section{Applications and further results}

	\subsection{Generalized independent families and strong sequences}
One of the most known family which is known in the literature is an independent family. As we assume that it is the folklore we only add that this notion has been generalized  in \cite{SE, WH}.  We recall here its definition. 
\\

	A family $\mathcal{I} = \{\{I_{\alpha}^{\beta} \colon \alpha < \lambda_\beta\} \colon \beta < \eta \}$ of subsets of a set $S$ with $|S|= \lambda$ is called a \textit{($\gamma, \mu$)-independent family} if
\begin{enumerate}
	\item  $ \bigcup_{\alpha < \lambda_\beta} I^{\beta}_{\alpha} = S$ for any $\beta < \eta$ and $\lambda_\beta \geqslant 2$ for each $\beta < \eta$,
	
	\item $I^{\beta}_{\alpha} \cap I^{\beta}_{\alpha'} = \emptyset$ for any $\beta < \eta$ and $\alpha, \alpha' < \lambda$ with $\alpha \not = \alpha'$, 
	
	\item   $|\bigcap_{\beta \in J}I^\beta_{\delta(\beta)}| \geqslant \mu$ for each $J \in [\eta]^\gamma$ and each $\delta \in \Pi_{\beta \in J}\lambda_\beta$.
\end{enumerate}
If $\lambda_\beta = \lambda$ for all $\beta < \eta$, then $\mathcal{I}$ is called a $(\gamma,\mu, \lambda )$-independent family on $S$. 
\\ 

Comparing the above definition with the  generalized BK-matrix definition in Section 2 we come to the conclusion that if we assume that the intersection in (3) in 2.3 is non-empty (this suggests the original proof by Banach and Kuratowski) and that the intersection in (3) above have the cardinality equal to $\mu$ the we have the following corollary. \footnote{Unfortunately \cite[Proposition 4.2]{JJOpen} and the information on p. 728 in \cite{JJOpen} and \cite[Corollary 5.3 (2)]{JJOpen} are false}.
\begin{corollary}
	Let $\kappa > \omega$ be a regular cardinal such that $\kappa = \kappa^{<\kappa}$ and let $\lambda = cf(2^\kappa)$. Let $S\subseteq {^\kappa}\kappa$ be a set with $|S| = \lambda$.
	If there exists a $(\kappa, \mu, \kappa)$-independent family on $S$ and $\mu \leqslant \kappa$ then there exists a $\kappa$-BK-matrix on the same set $S$.
	If moreover $\lambda$ is regular then the existence of a $(\kappa, \mu, \kappa)$-independent family on $S$ and $\mu \leqslant \kappa$ implies the existence of a $\kappa^+$-strong sequence in $S$ of length $\lambda$.  
\end{corollary}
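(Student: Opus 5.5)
The plan is to pass from the independent family to a $\kappa$-BK-matrix by a direct re-indexing, checking conditions (1)--(3) of the definition in 2.3. Then, for the second part, we invoke Theorem 1, implication $[(ii)]\Rightarrow[(i)]$, to get the strong sequence. Let $\mathcal{I}=\{\{I^\beta_\alpha\colon \alpha<\kappa\}\colon \beta<\eta\}$ be a $(\kappa,\mu,\kappa)$-independent family on $S$ with $\mu\leqslant\kappa$. Since condition (3) for independence quantifies over $J\in[\eta]^\kappa$, we have $\eta\geqslant\kappa$. We first pass to a subfamily indexed by $\beta<\kappa$ by fixing $J_0\in[\eta]^\kappa$ and re-enumerating $J_0$ in order type $\kappa$. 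We then set $A^\beta_\alpha=I^\beta_\alpha$ for $\beta,\alpha<\kappa$. Conditions (1) and (2) of the BK-matrix definition are literally conditions (1) and (2) of the independent family, restricted to rows in $J_0$.

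The substance is condition (3). We must bound, for every sequence $(\gamma_\beta)_{\beta<\kappa}$ of ordinals below $\kappa$, the set
$$\bigcap_{\beta<\kappa}\bigcup_{\alpha\leqslant\gamma_\beta}A^\beta_\alpha .$$
We follow the convention announced before the corollary: the intersections in (3) of 2.3 are taken to be nonempty, and the independent intersections in (3) above have size exactly $\mu$. The set above is then the union, over all choice functions $\delta\in\prod_{\beta<\kappa}(\gamma_\beta+1)$, of the sets $\bigcap_{\beta<\kappa}A^\beta_{\delta(\beta)}$. Each of these has size $\mu\leqslant\kappa$ by the exactness assumption. Distinct $\delta$ give disjoint pieces by (2).

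The main obstacle is that the number of $\delta$'s can be as large as $\kappa^\kappa=2^\kappa$, so a naive cardinality count does not give $\leqslant\kappa$. I would first try to refine the row partitions so that the counting works. Grouping columns into blocks of size $<\kappa$ and using $\kappa^{<\kappa}=\kappa$ should control the count row by row. One then shows that a point lying in the intersection determines its choice function $\delta$ uniquely, namely $\delta(\beta)=$ the column of the point in row $\beta$. Since $|S|=\lambda$ while the number of points is at most the number of realized $\delta$'s times $\mu$, it remains to bound the realized $\delta$'s dominated by $(\gamma_\beta)$ by $\kappa$. This step is where the strength of the hypothesis is really used, and it is where I expect the argument to need the exact-size reading of (3) rather than merely $\geqslant\mu$. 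If this refinement fails, the fallback is to shrink $S$: choose a subset of size $\lambda$ meeting each full intersection in exactly one point. The bound then comes from the BK-style diagonal enumeration $S=\{s_\xi\colon\xi<\lambda\}$, as in the proof of Theorem 1.

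For the second statement, once a $\kappa$-BK-matrix on $S$ exists, the implication $[(ii)]\Rightarrow[(i)]$ of Theorem 1 yields a $\kappa^+$-strong sequence $(S_\xi,H_\xi)_{\xi<\lambda}$ with $|H_\xi|\leqslant\kappa$. Regularity of $\lambda$ is used to ensure that at every stage $\eta<\lambda$ the union $\bigcup_{\xi<\eta}H_\xi$, of size $<\lambda$, does not exhaust $S$. This lets the recursive choice of $f_\eta$ continue through all $\lambda$ steps, so the sequence has length exactly $\lambda$ and lives inside $S$.
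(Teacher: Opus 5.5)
The step you call ``the main obstacle'' cannot be closed by a better count. For the re-indexed matrix $A^\beta_\alpha=I^\beta_\alpha$, condition (3) is false, and the independence hypothesis is exactly what refutes it. Take $\gamma_\beta=1$ for all $\beta<\kappa$. For every $\delta\in{}^{\kappa}2$, independence applied to the $\kappa$ rows of $J_0$ gives $\bigcap_{\beta<\kappa}A^\beta_{\delta(\beta)}\neq\emptyset$. This uses only $\mu\geqslant 1$, so reading (3) as ``exactly $\mu$'' changes nothing. These sets are pairwise disjoint by (2), and all lie in $\bigcap_{\beta<\kappa}(A^\beta_0\cup A^\beta_1)$, which therefore has at least $2^\kappa>\kappa$ points. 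So the $\delta$'s below $(\gamma_\beta)$ are not few; independence says precisely that \emph{all} of them are realized. Your fallbacks fail for the same reason:
grouping columns into blocks is a coarsening, which preserves independence, so the same count applies;
a transversal meeting every full intersection in one point still has a point for each $\delta\in{}^{\kappa}2$;
and the diagonal enumeration from Theorem 1 needs a CH-type input, such as a $\kappa$-K-Lusin set, which an independent family does not provide.

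No route can close this, because the first claim is not provable from its hypotheses. Suppose $2^\kappa$ is regular, so $\lambda=2^\kappa$. Then the sets $I^\beta_\alpha=\{f\in{^\kappa}\kappa\colon f(\beta)=\alpha\}$, $\alpha,\beta<\kappa$, form a $(\kappa,1,\kappa)$-independent family on $S={^\kappa}\kappa$ in ZFC. Conversely, given a $\kappa$-BK-matrix on $S$, send $s$ to its column function $F(s)(\beta)=$ the $\alpha$ with $s\in A^\beta_\alpha$. Since $\kappa$ is regular and $\kappa^{<\kappa}=\kappa$, each set $\{s\colon F(s)\preceq g\}$ is a union of $\kappa$ sets $\{s\colon F(s)\leqslant g'\}$. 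Here $\leqslant$ is pointwise and $g'$ is $g$ with an initial segment changed. Each of these sets has size $\leqslant\kappa$ by (3). Hence any $\kappa^+$ points of $S$ have an unbounded image, and $\mathfrak{b}_\kappa=\kappa^+$. By the Cummings--Shelah result quoted in Section 4.3, $\mathfrak{b}_\kappa=2^\kappa=\kappa^{++}$ is consistent with $\kappa^{<\kappa}=\kappa$. In such a model the hypothesis holds and the conclusion fails: independence is the opposite of the BK condition, not a strengthening of it. The paper's own justification is only the remark comparing the two definitions under the same extra conventions you adopted, so it does not supply the missing step either. Your second part feeds this matrix into Theorem 1 $[(ii)]\Rightarrow[(i)]$, so it falls with the first.
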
 

Other connections between generalized independent sets and strong sequences one can also be found  in \cite{JJ1}.

	\subsection{Calibres and strong sequences} In the literature there are known two notions which are cardinal invariants: calibre and precalibre. However they are generally different, there are spaces for which the are equal, for example in compact spaces. 
	In this part we show the relation between calibres and strong sequences in ${^\kappa}\kappa$.
	
	Let $(X, r)$ be the set with the relation $r$ on $[X]^2$. 
	Let $\lambda$ and $\eta$ be cardinals with $\omega  \leqslant \eta \leqslant \lambda$. $X$  is called to have a $(\lambda, \eta, \alpha)$-calibre if for any set $A \in [X]^\lambda$ of the cardinality $\lambda$ there exists a subset $B \subseteq A$ of  the cardinality  $\eta$ which is $\alpha$-directed. If  moreover $\lambda=\eta$ then we say that $X$ has a $(\lambda, \alpha)$-calibre and if moreover $\alpha = \omega$ then we say that $X$ has a $\lambda$-calibre.
	
\begin{theorem}  
	Let $\eta, \mu, \alpha$ be cardinals such that $\alpha \leqslant \mu < \eta$ and $\eta$ be regular. Let $(X, r)$ be a set  of cardinality at least $\eta$.
	If $X$ has no $(\eta, \mu, \alpha)$-calibre then there exists an $\alpha$-strong sequence $(S_\xi, H_\xi)_{\xi< \eta}$ such that $|H_\xi|< \mu$. 
\end{theorem}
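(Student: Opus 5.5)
We propose to build the sequence by transfinite recursion of length $\eta$, using the failure of the calibre to keep the recursion alive at every stage. The sequence uses the general (set-with-relation) version of strong sequences from Section 2.4, so $S_\xi\cup H_\xi$ must be $\alpha$-directed and $S_\zeta\cup H_\xi$ must fail to be $\alpha$-directed for $\xi<\zeta$. Since $X$ has no $(\eta,\mu,\alpha)$-calibre, we fix a set $A\in[X]^\eta$ such that no $B\subseteq A$ with $|B|=\mu$ is $\alpha$-directed, and enumerate it as $A=\{a_\gamma\colon\gamma<\eta\}$. Everything will be chosen inside $A$. Consequently every $\alpha$-directed subset of $A$ has cardinality $<\mu$. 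This is the source of the bound $|H_\xi|<\mu$, and it also gives $|S_\xi|<\mu\leqslant$ anything needed, with $S_\xi$ a singleton anyway.

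\textbf{Recursion.} Suppose $(S_\xi,H_\xi)_{\xi<\zeta}$ has been constructed for some $\zeta<\eta$ with $S_\xi=\{a_{\gamma_\xi}\}$. We choose $H_\xi$ to be a \emph{maximal} $\alpha$-directed subset of $A$ containing $a_{\gamma_\xi}$; this exists by Zorn's lemma, or by a greedy enumeration along $A$. Then $|H_\xi|<\mu$. Put $U_\zeta=\bigcup_{\xi<\zeta}H_\xi$. Since $\mu<\eta$, $\zeta<\eta$ and $\eta$ is regular, we get $|U_\zeta|<\eta$, so $A\setminus U_\zeta\neq\emptyset$. We pick $a_{\gamma_\zeta}$ to be the element of least index in $A\setminus U_\zeta$, set $S_\zeta=\{a_{\gamma_\zeta}\}$, and let $H_\zeta$ be a maximal $\alpha$-directed subset of $A$ containing it. Condition (1) then holds automatically, since $S_\zeta\cup H_\zeta=H_\zeta$ is $\alpha$-directed.

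\textbf{Condition (2), the main obstacle.} For $\xi<\zeta$ we need $\{a_{\gamma_\zeta}\}\cup H_\xi$ not to be $\alpha$-directed. Maximality of $H_\xi$ gives exactly this, because $a_{\gamma_\zeta}\notin H_\xi$ and any $\alpha$-directed proper enlargement of $H_\xi$ inside $A$ would contradict maximality. The delicate point is that maximality must be taken with respect to subsets of $A$ that are directed in the sense of Section 2.4, namely possessing a common $r$-upper bound $b$ in $X$. This is a property closed under subsets but not obviously under unions of chains, since the witness $b$ may change along the chain. If Zorn's lemma does not apply, we instead define $H_\xi$ greedily: run through $A$ in the order of the enumeration and add $a_\gamma$ whenever the current set stays $\alpha$-directed. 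Then any $a\notin H_\xi$ was rejected at some stage, so $\{a\}$ together with an initial part of $H_\xi$ is not directed, and hence, by closure under subsets, $\{a\}\cup H_\xi$ is not directed. For the size bound we must check that the greedy set still has cardinality $<\mu$. If it reached size $\mu$, its initial segment of size $\mu$ would be directed, since it was accepted at that stage, and this would contradict the choice of $A$. This greedy version is the one we would write out, and it completes the recursion through all $\zeta<\eta$.
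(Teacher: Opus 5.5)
\textbf{There is a genuine gap, and it cannot be repaired.} It sits exactly where you suspected: at the limit stages of your greedy construction, which is the same problem as the existence of a maximal directed $H_\xi$.

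With directedness meaning ``has a common $r$-upper bound in $X$'' (the definition of Section 2.4, which you use), your $H_\xi$ is the union of an increasing chain $(D_\gamma)$, with $D_\delta=\bigcup_{\gamma<\delta}D_\gamma$ at limits. Each $D_\gamma$ has some bound $b_\gamma$, but $D_\delta$ itself is never tested and need not be bounded. Two things follow:
\begin{itemize}
\item Your sentence ``its initial segment of size $\mu$ would be directed, since it was accepted at that stage'' is unjustified. For infinite $\mu$, the first stage at which the set reaches size $\mu$ is a limit stage, where nothing is accepted.
\item Once some limit union is unbounded, $H_\xi$ itself violates condition (1).
\end{itemize}
You also tacitly need every $\{a_{\gamma_\xi}\}$ to be directed, i.e.\ every point of $A$ to have an $r$-upper bound. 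With the empty relation the statement fails outright.

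The gap cannot be patched, because under this definition the statement is false. Let $X=[\omega_1]^{<\omega}$ with $a\,r\,b$ iff $a\subsetneq b$, and take $\alpha=\mu=\omega$, $\eta=\omega_1$.
\begin{itemize}
\item Every finite subset of $X$ has an $r$-upper bound.
\item No infinite subset of $A=\{\{\gamma\}\colon\gamma<\omega_1\}$ has one. So $X$ has no $(\omega_1,\omega,\omega)$-calibre.
\item Yet whenever $|S_1|<\omega$ and $|H_0|<\omega$, the set $S_1\cup H_0$ is finite, hence directed. So not even a length-$2$ sequence of the required kind exists.
\end{itemize}
Running your greedy procedure from $\{0\}$ in this example produces the unbounded set $\{\{n\}\colon n<\omega\}$ at stage $\omega$. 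More generally, the conclusion fails whenever all subsets of $X$ of size $<\mu$ are bounded but some $A\in[X]^\eta$ has no bounded subset of size $\mu$.

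The paper's own proof does not get around this. It takes $H_\xi=\{f\in A\setminus\bigcup_{\gamma<\xi}H_\gamma\colon f\,r\,f_\xi\}$ and simply stipulates an $f_\zeta$ with $\{f_\zeta\}\cup H_\xi$ non-directed for all $\xi<\zeta$. That is exactly the choice that is impossible in the example above.

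Your maximality mechanism is the right way to force condition (2). Your recursion becomes a complete proof once you add a hypothesis that makes singletons directed and directed subsets of $A$ closed under unions of chains. For instance, take $r$ reflexive and read ``$\alpha$-directed'' in the usual sense (every subset of size $<\alpha$ has an $r$-upper bound) with $\alpha=\omega$; then Zorn's lemma applies directly.
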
  

\begin{proof}
	Let $A \subseteq X$ be a set of cardinality $\eta$. Since there exists a $(\eta, \mu \alpha)$-calibre then each  $B\subseteq A$ which is $\alpha$-directed has cardinality less than $\mu$.
	
	Let $f_0 \in A$ be an arbitrary element and let $H_0 \subseteq A$ be a maximal $\alpha$-directed subset dominated by $f_0$. Let $S_0 = \{f_0\}$ and $H_0 = \{f \in A \colon frf_0\}$.  
	 Let $(S_0,H_0)$ be the first pair of the $\alpha$-strong sequence $(S_\xi, H_\xi)_{\xi< \eta}$.
	 
	Assume that we have defined the $\alpha$-strong sequence $(S_\xi, H_\xi)_{\xi< \zeta}$ such that $|H_\xi|< \mu$ for $\zeta<\eta$ such that Let $$S_\xi = \{f_\xi\}\textrm{ and }H_\xi = \{f \in A \setminus \bigcup\{H_\gamma \colon \gamma < \xi\} \colon frf_\xi\}.$$ 
	Since $\zeta < \eta$ then 
	$$A \setminus \bigcup\{H_\xi \colon \xi < \eta\} \not = \emptyset.$$  
	Hence we can continue the construction of the $\alpha$-strong sequence $(S_\xi, H_\xi)_{\xi< \eta}$.
	
	Consider two cases
	
	Case 1. $\zeta = \xi+1$. 
	Let $f_\zeta \in A \setminus \bigcup\{H_\xi \colon \xi < \eta\}$ is such that $\{f_\zeta\} \cup H_\xi$ is not $\alpha$-directed for each $\xi< \zeta$. Let $$S_\zeta = \{f_\zeta\} \textrm{ and } H_\zeta=\{f \in A\setminus\bigcup\{H_\xi \colon \xi < \zeta\} \colon frf_\zeta\}.$$
	Then $(S_\zeta, H_\zeta)$ be the next pair of the $\alpha$-strong sequence. 
	
	Case 2. $\zeta$ is limit then we take $S_\zeta = \bigcap_{\xi< \zeta}S_\xi$ and $H_\zeta = \bigcap_{\xi < \zeta} H_\xi$.
	Then $(S_\zeta, H_\zeta)$ be the next pair of the $\alpha$-strong sequence. 
	
	Hence we have constructed $\alpha$-strong sequence $(S_\xi, H_\xi)_{\xi < \eta}$. 
\end{proof}

\begin{corollary}
	Let $\kappa > \omega$ be a regular cardinal such that $\kappa = \kappa^{<\kappa}$ and let $\lambda = cf(2^\kappa)$. Let $\eta, \mu, \kappa^+$ be cardinals such that $\kappa^+ \leqslant \mu < cf(2^\kappa)$. Let $(X, \preceq)$ be a set such that $X \subseteq {^\kappa}\kappa$ of cardinality at least $\eta$.
	If $X$ has no $(\eta, \mu, \alpha)$-calibre then there exists an $\alpha$-strong sequence $(S_\xi, H_\xi)_{\xi< \eta}$ such that $|H_\xi|< \mu$.
\end{corollary}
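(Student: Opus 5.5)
The plan is to obtain the corollary as a direct specialization of the preceding Theorem (the calibre theorem) to the structure $(X,\preceq)$ with $X \subseteq {^\kappa}\kappa$. The relation $r$ will be $\preceq$ restricted to pairs from $X$, so that ``$\alpha$-directed'' means: there is $b \in X$ with $a \preceq b$ for every $a$ in the set. The remaining work is to check that the hypotheses of the Theorem hold for the chosen parameters.

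First, I will fix the parameters so that they match the Theorem. The statement gives $\kappa^+ \leqslant \mu < cf(2^\kappa)$, and I read $\alpha$ as $\kappa^+$. This reading is what the listing ``$\eta, \mu, \kappa^+$'' suggests, and it agrees with the use of $\kappa^+$-strong sequences elsewhere in the paper. The Theorem needs $\alpha \leqslant \mu < \eta$ with $\eta$ regular. For $\eta$ I will take $\lambda = cf(2^\kappa)$, which is always regular. This also gives $\mu < \eta$, because $\mu < cf(2^\kappa)$. If $\eta$ is meant to be arbitrary, I will instead impose $\mu < \eta$ and require $\eta$ regular as implicit hypotheses, and say so explicitly. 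The assumption $\kappa = \kappa^{<\kappa}$ will not be used in the argument. It is there only so that we stay in the setting of the generalized Baire space.

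Second, I will check that $|X| \geqslant \eta$, which is assumed. Then the Theorem applies verbatim and gives an $\alpha$-strong sequence $(S_\xi,H_\xi)_{\xi<\eta}$ in $(X,\preceq)$ with $|H_\xi|<\mu$. Finally, I will translate this into the ${^\kappa}\kappa$ interpretation of strong sequences from Section 2. Each $S_\xi=\{f_\xi\}$ produced in the Theorem's proof is a singleton. Each $H_\xi$ consists of functions $g$ with $g \preceq f_\xi$. Hence clause (1) holds with $f_\xi \in S_\xi$ dominating $S_\xi$. Clause (2) holds because $\{f_\zeta\}\cup H_\xi$ is not directed for $\xi<\zeta$, so some $g\in H_\xi$ satisfies $g \not\preceq f_\zeta$.

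The main obstacle is this last translation. A set $\{f_\zeta\}\cup H_\xi$ failing to be directed in $X$ does not by itself yield $g \in H_\xi$ with $g\not\preceq f_\zeta$, since the candidate upper bound could be some element other than $f_\zeta$. I will handle this by invoking the Theorem's construction directly: there $f_\zeta$ is chosen outside $\bigcup_{\xi<\zeta}H_\xi$, and so $f_\zeta$ is not below $f_\xi$. If that does not suffice, I will strengthen the choice of $f_\zeta$ so that it fails to dominate some element of each earlier $H_\xi$. Such a choice is possible because fewer than $\eta$ sets of size $<\mu$ cannot exhaust a set $A$ of size $\eta$ when $\eta$ is regular. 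This is exactly the counting used in the Theorem.
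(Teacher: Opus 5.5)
Your plan is the paper's own route. The corollary has no separate proof there; it is simply the calibre theorem read with $r={\preceq}$ on $X\subseteq{^\kappa}\kappa$. However, the obstacle you single out is misplaced, and your repair does not close the real gap.

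\textbf{The translation is not the problem, and your repair misses the real gap.} The translation is immediate: $\preceq$ is reflexive and $f_\zeta\in X$. So if $\{f_\zeta\}\cup H_\xi$ has no upper bound in $X$, then $f_\zeta$ is not one either, and some $g\in H_\xi$ satisfies $g\not\preceq f_\zeta$. The unjustified step sits upstream, in the Theorem's proof. That proof simply \emph{asserts} that some $f_\zeta\in A\setminus\bigcup_{\xi<\zeta}H_\xi$ makes every $\{f_\zeta\}\cup H_\xi$ non-directed. Its limit clause $S_\zeta=\bigcap_{\xi<\zeta}S_\xi$ also gives $S_\zeta=\emptyset$.

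Your counting only shows $A\setminus\bigcup_{\xi<\zeta}H_\xi\neq\emptyset$. The failure of the calibre controls down-cones $\{x\in A: x\preceq a\}$, because these are bounded subsets of $A$ and hence have size $<\mu$. What you must avoid, however, is the set of $f\in A$ that dominate all of some earlier $H_\xi$. That is a union of up-cones, and the hypothesis says nothing about it. Also, your remark that $f_\zeta\notin H_\xi$ gives $f_\zeta\not\preceq f_\xi$ goes in the wrong direction; clause (2) needs something like $f_\xi\not\preceq f_\zeta$.

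Concretely, take $X={^\kappa}\kappa$. Adding the constant function $0$ to a witness $A$ keeps the calibre failing. If $f_0=0$, every element of $H_0$ is $\preceq 0$ and hence dominated by every function. So clause (2) fails at $\xi=0$ whatever is chosen later, and no strengthening of the choice at stage $\zeta$ can repair a bad earlier choice.

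\textbf{The Theorem cannot be cited verbatim.} With its notion of strong sequence (non-directed meaning no common upper bound in $X$), the corollary is consistently false. Add $\kappa^{++}$ Cohen functions $\kappa\to\kappa$ with supports of size $<\kappa$ to a model of GCH. Take $X={^\kappa}\kappa$, $\alpha=\mu=\kappa^+$ and $\eta=2^\kappa=\kappa^{++}$. By the usual support argument, every $\preceq$-bounded set of generic functions has size $\le\kappa$, so the calibre fails. Yet each $S_\zeta\cup H_\xi$ has size $\le\kappa$ and is therefore bounded in ${^\kappa}\kappa$. So there is no such sequence even of length $2$.

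\textbf{What is needed instead.} You have to aim directly at the ${^\kappa}\kappa$ form of clause (2), and choose the whole sequence at once rather than greedily. The missing idea is a free set. Put $F(a)=\{x\in A: x\preceq a\}\setminus\{a\}$; then $|F(a)|<\mu<\eta$. Hajnal's set-mapping theorem then yields $B\in[A]^\eta$ with $x\not\preceq y$ for all distinct $x,y\in B$. For regular $\mu$ and $\eta$, a continuous chain of $F$-closed sets plus Fodor's lemma on the ordinals of cofinality $\mu$ already suffices. Enumerate $B=\{b_\xi:\xi<\eta\}$ and set $S_\xi=H_\xi=\{b_\xi\}$. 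Clause (1) holds by reflexivity, clause (2) holds because $b_\xi\not\preceq b_\zeta$, and $|H_\xi|=1<\mu$.
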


\begin{corollary}  Let $\kappa > \omega$ be a regular cardinal such that $\kappa = \kappa^{<\kappa}$ and let $\lambda =  cf(2^\kappa)$. Let $X={^\kappa}\kappa$. Let $\eta, \mu, \kappa^+$ be cardinals such that $\kappa^+ \leqslant \mu < cf(2^\kappa)$. 
	If $X$ has no $(\eta, \mu, \alpha)$-calibre then: 
	\begin{itemize}
		\item [(i)] there exists a $\kappa^+$-strong sequence $(S_\xi, H_\xi)_{\xi < \lambda}$ in $X$ such that $|H_\xi|\leqslant \kappa$ for each $\xi < \lambda$; 
		\item [(ii)] there exists a $\kappa$-BK matrix in $X$;
		\item [(iii)]  $\mathfrak{l}_\kappa = \kappa^+$;
		\item [(iv)] There exists a $\kappa$-concentrated set of cardinality $\lambda$ in $X$;
		\item [(v)] there exists a $\kappa$-K-Lusin set of cardinality $\lambda$ in $X$.
	\end{itemize}
\end{corollary}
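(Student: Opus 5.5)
The plan is to chain the preceding results. Assume $X={^\kappa}\kappa$ has no $(\eta,\mu,\alpha)$-calibre with respect to $\preceq$. Item (i) should come from Theorem 2 (equivalently, the preceding corollary). Items (ii)--(v) then follow at once from Theorem 1, because all five items of that theorem are equivalent to its item (i). So the whole proof reduces to producing a $\kappa^+$-strong sequence of length $\lambda=cf(2^\kappa)$ with $|H_\xi|\leqslant\kappa$. Here I take $\alpha=\kappa^+$, which matches the parameter named in the statement, and I take $\eta=\lambda$, which must be regular; this holds since $cf(2^\kappa)$ is a regular cardinal.

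First step. Apply Theorem 2 with $(X,\preceq)$, $\eta=\lambda$ and $\alpha=\kappa^+$. The hypothesis $\alpha\leqslant\mu<\eta$ is exactly $\kappa^+\leqslant\mu<cf(2^\kappa)$, and $|X|=2^\kappa\geqslant\lambda$. Theorem 2 gives a $\kappa^+$-strong sequence $(S_\xi,H_\xi)_{\xi<\lambda}$ with $|H_\xi|<\mu$. In its construction each $S_\xi=\{f_\xi\}$ is a singleton, hence of size $<\kappa^+$. Each $H_\xi$ consists of elements of the witnessing set $A$ dominated by $f_\xi$, so it is a directed subset of $A$.

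Second step, which I expect to be the main obstacle. The bound $|H_\xi|<\mu$ must be sharpened to $|H_\xi|\leqslant\kappa$. Theorem 2 only gives $|H_\xi|<\mu$, and $\mu$ may exceed $\kappa^+$. I would first try the case $\mu=\kappa^+$: failure of the $(\lambda,\kappa^+,\kappa^+)$-calibre says that some $A\in[X]^\lambda$ has no $\kappa^+$-directed subset of size $\kappa^+$. Then every set $\{f\in A\colon f\preceq f_\xi\}$ has size $\leqslant\kappa$, so $|H_\xi|\leqslant\kappa$.

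For general $\mu$ I would use monotonicity of calibres in the middle parameter. If $X$ had the $(\lambda,\kappa^+,\kappa^+)$-calibre, then any $B\subseteq A$ of size $\mu$ that is directed would contain a directed subset of size $\kappa^+$. This goes the wrong way, so instead I would restrict $A$. I would pass to the family of all sets $\{f\in A\colon f\preceq g\}$ and observe that if each of them has size $<\mu$ for every $g$, then $\lambda_\kappa(A)\leqslant\mu$. From this I would try to extract, by a counting argument using $\mu<cf(2^\kappa)$ and the regularity of $\lambda$, a subset $A'\in[A]^\lambda$ whose traces under every $g$ have size $\leqslant\kappa$. If that extraction fails, I would fall back to proving the corollary only for $\mu=\kappa^+$, which is the case that matches clause (i) of Theorem 1.

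Third step. With (i) in hand, invoke Theorem 1 directly to get (ii)--(v), namely the $\kappa$-BK matrix, $\mathfrak{l}_\kappa=\kappa^+$, the $\kappa$-concentrated set of cardinality $\lambda$, and the $\kappa$-K-Lusin set of cardinality $\lambda$. No further work is needed there beyond checking that the standing hypotheses $\kappa>\omega$ regular and $\kappa^{<\kappa}=\kappa$ are the same in both statements.
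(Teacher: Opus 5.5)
\textbf{The gap: your second step for $\kappa^+<\mu<cf(2^\kappa)$.} Your route is the one the paper intends; it gives no separate proof. You get item (i) of Theorem~1 from Theorem~2 (Corollary~3) and then read off (ii)--(v) from Theorem~1. Your treatment of $\mu=\kappa^+$, with $\alpha=\kappa^+$ and $\eta=\lambda$, is correct: there the bound $|H_\xi|<\mu$ from Theorem~2 is literally $|H_\xi|\leqslant\kappa$. For larger $\mu$, however, neither a counting argument nor anything else can close the gap.

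\begin{itemize}
\item A set $A'\in[A]^\lambda$ all of whose traces $\{f\in A'\colon f\preceq g\}$ have size $\leqslant\kappa$ has every subset of size $\kappa^+$ unbounded. So its existence implies $\mathfrak{b}_\kappa=\kappa^+$.
\item Conclusion (iii) also forces $\mathfrak{b}_\kappa=\kappa^+$. Every family of size $<\mathfrak{b}_\kappa$ is bounded, so $\mathfrak{l}_\kappa\geqslant\mathfrak{b}_\kappa$.
\item Failure of the $(\lambda,\mu,\kappa^+)$-calibre only gives $\mathfrak{b}_\kappa\leqslant\mu$ and $\mathfrak{d}_\kappa\geqslant\lambda$. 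It is compatible with $\mathfrak{b}_\kappa=\mu$.
\end{itemize}

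For instance, let $\lambda=2^\kappa=\kappa^{+3}$. Suppose $([\lambda]^{\leqslant\kappa^+},\subseteq)$ is cofinally order-embedded into $({^\kappa}\kappa,\preceq)$ by some $p\mapsto f_p$. I expect a generalized Hechler forcing of Cummings--Shelah type can arrange this, though I have not checked the details. Then $A=\{f_{\{\xi\}}\colon\xi<\lambda\}$ has all bounded subsets of size $\leqslant\kappa^+<\kappa^{++}=\mu$. Yet every family of size $\kappa^+$ is bounded, so $\mathfrak{b}_\kappa=\kappa^{++}$. Hence no $A'$ of the kind you want exists, and (iii) fails.

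So for $\mu>\kappa^+$ the corollary appears to be consistently false, and in any case it cannot follow from Theorem~1, since the calibre hypothesis alone only gives $\mathfrak{b}_\kappa\leqslant\mu$. The paper's chaining of Theorem~2 into Theorem~1 silently ignores exactly this mismatch between $|H_\xi|<\mu$ and $|H_\xi|\leqslant\kappa$. Your fallback to $\mu=\kappa^+$ is the correct form of the result, not a weakening you could hope to remove.

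\textbf{The choice of $\eta$.} The statement leaves $\eta$ free, so $\eta=\lambda$ is a hypothesis you must add, not a choice you may make. Any $\eta\geqslant\lambda$ works: shrink the witness $A$ to size $\lambda$, and its bounded subsets stay of size $<\mu$. Small $\eta$ does not work. For $\eta=\mu=\kappa^+$, the hypothesis already holds whenever $\mathfrak{b}_\kappa=\kappa^+$ (take a strictly $\preceq$-increasing unbounded $\kappa^+$-sequence). But (iii) also forces $\mathfrak{d}_\kappa=2^\kappa$: cover a witness $\mathcal{F}$ by $\mathfrak{d}_\kappa$ bounded pieces, each of size $\leqslant\kappa$. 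And $\mathfrak{b}_\kappa=\kappa^+<\mathfrak{d}_\kappa<2^\kappa$ is consistent by Cummings--Shelah.

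With $\alpha=\mu=\kappa^+$ and $\eta\geqslant\lambda$, your argument is complete modulo Theorem~1, exactly as the paper intends.
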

	
	\subsection{Boundedness,  dominating and strong sequences}
	
	Cardinal characteristics for $\kappa > \omega$  have been studied for various researchers and this topic is still vivid. In \cite{BBFM}, there are presented models in ZFC having the influence for generalized Cicho\'n diagram presented in Section 2. Other bibliographical data for this topic can be found  in \cite{KLLS}. Here we are concentrated on the number associated with strong sequences. Below we show some more interesting results concerning generalized boundedness and generalized dominating number.
	
In \cite{CS} it is proved that for a regular $\kappa> \omega$, $$\kappa^+ \leqslant cf(\mathfrak{b}_\kappa) = \mathfrak{b}_\kappa \leqslant cf(\mathfrak{d}_\kappa) \leqslant \mathfrak{d}_\kappa\leqslant 2^\kappa$$ and this is the only restriction on $\mathfrak{b}_\kappa$ and $\mathfrak{d}_\kappa$ that are provable in ZFC. \footnote{It shows that  $\mathfrak{b}_\kappa$ and $\mathfrak{d}_\kappa$ behave in the same way as $\mathfrak{b}$ and $\mathfrak{d}$.}
 
 Below, we present some connections between $\mathfrak{b}_\kappa$, $\mathfrak{d}_\kappa$ and $\hat{\mathfrak{s}}_\kappa$.
 
 1. In generalized Sacks forcing we have $\mathfrak{b}_\kappa=\kappa^+ <\mathfrak{d}_\kappa = \kappa^{++}$. Hence $\hat{\mathfrak{s}}_\kappa = \kappa^{++}$.
 
 2. If   $\mathfrak{b}_\kappa=\kappa^+ <\mathfrak{d}_\kappa = 2^\kappa$ then $\hat{\mathfrak{s}}_\kappa = 2^\kappa$.
 
 3. Let $\mathfrak{s}_\kappa$ and $\mathfrak{r}_\kappa$ means splitting and reaping number, respectively (see \cite{RS, RS1} for definitions). Then under GCH $\hat{\mathfrak{s}}_\kappa = 2^\kappa$ whenever $\mathfrak{s}_\kappa= \mathfrak{r}_\kappa = \kappa^+$.
 
 4. In the generalized Hechler forcing we have 
 $\mathfrak{r}_\kappa=\kappa^+ <\mathfrak{s}_\kappa = \kappa^{++}$ and hence $\hat{\mathfrak{s}}_\kappa = 2^\kappa$.
 
 5. If  $\mathfrak{b}_\kappa= \mathfrak{d}_\kappa = 2^\kappa = \kappa^{++}$ then $\hat{\mathfrak{s}}_\kappa < 2^\kappa$.
 
 6. In generalized Cohen forcing 
	$\mathfrak{b}_\kappa= \mathfrak{d}_\kappa  = \kappa^{+}$ then $\hat{\mathfrak{s}}_\kappa < 2^\kappa$.
	
7. If $\mathfrak{b}_\kappa= \mathfrak{d}_\kappa  = 2^\kappa$ then $\hat{\mathfrak{s}}_\kappa < 2^\kappa$.
	
8. If $\mathfrak{b}_\kappa= \mathfrak{d}_\kappa  = \kappa^{+}$ then $\hat{\mathfrak{s}}_\kappa < \kappa^{++}$.

9. Under $MA_\kappa+2^\kappa>\kappa^+$ $\hat{\mathfrak{s}}_\kappa < \kappa^{+}$.

10. If $\mathfrak{s}_\kappa= \kappa^+ <  \mathfrak{r}_\kappa = \kappa^{++}$ then $\hat{\mathfrak{s}}_\kappa < 2^\kappa$.

We can also give further conncections among $\hat{\mathfrak{s}}_\kappa$ and other cardinal invariants: $\mathfrak{a}_\kappa$ and $\mathfrak{i}_\kappa$ associated with generalize MAD families and generalized independent families, but it requires recalling here a number of definitions and is beyond the scope of this paper.

	\subsection{Partition relations and strong sequences}
	
	Let $\beta$ and $\eta$ be cardinals. By $\beta \ll \eta$ we denote: $\eta$ is \textit{$\beta$-strong inaccessible}, i. e. $\beta <\eta$ and $\alpha^\lambda <\eta$, whenever $\alpha < \eta, \lambda < \beta$.
\\
	
	Let $\alpha, \beta$ and $\lambda$ be cardinals and $n < \omega$.
	The \textit{arrow notation} 
	$$(\alpha) \to (\beta)^{n}_{\lambda}$$
	denotes the following partition relation: if
	$[\alpha]^n = \bigcup_{i < \alpha} P_i$,
	then there are $A \subset \alpha$ and $i < \lambda$ such that $|A| = \beta$, and $[A]^n \subset P_i$, (see e. g. \cite{CN}).
	\\
	
	In paper \cite{JJ2} there were proved the following results.

	\begin{theorem}[\cite{JJ2}]
		Let $\beta, \mu, \eta, \kappa$ be cardinals such that $\omega \leq \beta \ll \eta$,  $\mu < \beta$ and $\beta, \eta$ be regular.  Let $X$ be a set of cardinality $\eta$.
		The following statements are equivalent:
		\begin{itemize}
			\item[(i)]
			If in $X$ there exists a $\kappa$-strong sequences $(S_\alpha, H_\alpha)_{\alpha < \eta}$ with $|H_\alpha| \leq 2^\mu$ for all $\alpha < \eta$, then there exists a $\kappa$-strong sequence $(S_\alpha, T_\alpha)_{\alpha < \beta}$ with $T_\alpha \subseteq H_\alpha$ and $|T_\alpha|< \kappa$  for all $\alpha < \beta$ and $\kappa< \eta$.  
			\item [(ii)]  
			$(\eta) \to (\beta)^{2}_{\mu}.$
		\end{itemize} 
	\end{theorem}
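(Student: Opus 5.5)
The plan is to prove the two implications separately. In both, strong sequences are translated into colourings of $[\eta]^2$ and back. Throughout, $\beta\ll\eta$ is used only to make sure that everything indexed by fewer than $\beta$ parameters has fewer than $\eta$ possibilities. In particular $2^\mu<\eta$, since $2<\eta$ and $\mu<\beta$.

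\textbf{(ii)$\Rightarrow$(i).} Let $(S_\alpha,H_\alpha)_{\alpha<\eta}$ be a $\kappa$-strong sequence with $|H_\alpha|\leq 2^\mu$. For each $\alpha$ fix an enumeration $H_\alpha=\{h^\alpha_x\colon x\in {}^\mu 2\}$, repeating elements if necessary. For $\alpha<\zeta<\eta$ the set $S_\zeta\cup H_\alpha$ is not $\kappa$-directed.
\begin{enumerate}
\item First I will show that non-directedness is witnessed by a small piece of $H_\alpha$. That is, there should be $T\subseteq H_\alpha$ with $|T|<\kappa$ such that $S_\zeta\cup T$ is still not $\kappa$-directed. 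I intend to get this from $|S_\zeta|<\kappa$ and the definition of $\kappa$-directedness.
\item Next I use the enumeration to code the index set $\{x\colon h^\alpha_x\in T\}$ by at most $\mu$ coordinates. I then colour the pair $\{\alpha,\zeta\}$ with a colour $c(\alpha,\zeta)<\mu$ that selects the relevant coordinate; for instance, the least $i<\mu$ at which the codes of the witnesses for $\alpha$ and for $\zeta$ differ.
\item By (ii) there is a $c$-homogeneous set $A\subseteq\eta$ with $|A|=\beta$. Since $\beta$ is regular, I re-index $A$ increasingly by $\beta$.
\item For $\alpha\in A$ let $T_\alpha\subseteq H_\alpha$ be the witness picked out by the common colour. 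Homogeneity should make one and the same $T_\alpha$ work simultaneously for every later $\zeta\in A$. Each $S_\alpha\cup T_\alpha$ is $\kappa$-directed because it is a subset of the directed set $S_\alpha\cup H_\alpha$. This yields the required sequence $(S_\alpha,T_\alpha)_{\alpha<\beta}$.
\end{enumerate}

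\textbf{(i)$\Rightarrow$(ii).} Let $c\colon[\eta]^2\to\mu$ be a colouring.
\begin{enumerate}
\item Build $X=\eta\times\mu$, which has cardinality $\eta$, together with a relation $r$ designed as follows. Put $S_\alpha=\{(\alpha,0)\}$ and $H_\alpha=\{\alpha\}\times\mu$, so $|H_\alpha|\leq\mu\leq 2^\mu$. Arrange $r$ so that $S_\zeta\cup\{(\alpha,i)\}$ fails to be $\kappa$-directed exactly when $\alpha<\zeta$ and $i=c(\alpha,\zeta)$, while $S_\alpha\cup H_\alpha$ is directed.
\item This makes $(S_\alpha,H_\alpha)_{\alpha<\eta}$ a $\kappa$-strong sequence, so (i) applies and gives a strong subsequence $(S_\alpha,T_\alpha)_{\alpha<\beta}$.
\item Each $T_\alpha$ can be taken to be a single point $(\alpha,i_\alpha)$. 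Since $\beta$ is regular and $\mu<\beta$, pigeonhole gives $\beta$ many indices with the same $i_\alpha=i$.
\item The defining property of $r$ then forces $c(\alpha,\zeta)=i$ for all such pairs, so these indices form a homogeneous set of size $\beta$.
\end{enumerate}

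The main obstacle is step 1 together with step 2 of (ii)$\Rightarrow$(i). These steps have to produce one small witness $T_\alpha$ that works uniformly for every later $\zeta$, whereas a priori the witness depends on the pair $(\alpha,\zeta)$. My first attempt is to colour by the full code of the witness, which is a subset of ${}^\mu 2$ of size $<\kappa$. I would then use $\beta\ll\eta$ to thin out to a subfamily of size $\eta$ on which these codes are constant before applying the partition relation. If that cannot be done with only $\mu$ colours, I would fall back on an Erdős–Rado-style end-homogeneous tree argument.
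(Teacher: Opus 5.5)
\textbf{There is a genuine gap, in (ii)$\Rightarrow$(i) at the point you suspect.} The paper only quotes this theorem and gives no proof, so there is nothing to compare against; the proposal has to be judged on its own.

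Step~1 is fine for the usual notion, where $A$ is $\kappa$-directed iff every subset of $A$ of size $<\kappa$ has an upper bound. It fails for the single-upper-bound definition printed in Section~2.4.

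Step~2 is where it breaks. The colour ``least $i<\mu$ at which the codes of the witnesses for $\alpha$ and for $\zeta$ differ'' is not defined, because a witness $T_{\alpha\zeta}\subseteq H_\alpha$ belongs to the \emph{pair} $(\alpha,\zeta)$. Homogeneity for a first-difference colouring would not make these pair-dependent witnesses agree anyway. So step~4 (``one and the same $T_\alpha$ works for every later $\zeta$'') is unsupported.

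Your fallback of colouring by the full code $C_{\alpha\zeta}\in[{}^\mu2]^{<\kappa}$ needs $(2^\mu)^{<\kappa}$ colours, and (ii) cannot be upgraded to that many. For infinite $\mu$, take $\eta=(2^\mu)^+$ and $\beta=\mu^+$: all hypotheses hold and $(2^\mu)^+\to(\mu^+)^2_\mu$, yet $(2^\mu)^+\not\to(3)^2_{2^\mu}$. Nor can you ``thin out'' so that the codes become constant. They are indexed by pairs, and making a pair-colouring constant on a large set is a partition relation, not a pigeonhole step.

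\textbf{The missing idea is the one you list last, and it makes (ii) unnecessary.} You do not need $C_{\alpha\zeta}$ to be constant, only independent of $\zeta$, i.e.\ an \emph{end-homogeneous} set. Take any colouring of $[\eta]^2$ with $\nu<\eta$ colours. Since $\eta$ is regular and $\beta\ll\eta$, its Erd\H{o}s--Rado tree has at most $\nu^{|\xi|}<\eta$ nodes on each level $\xi<\beta$. So it has fewer than $\eta$ nodes below level $\beta$, hence a branch of order type $\beta$, and such a branch is end-homogeneous.

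For $\kappa\le\beta$ the code colouring has $(2^\mu)^{<\kappa}<\eta$ colours, again by $\beta\ll\eta$ and regularity of $\eta$. Along such a branch $T_\alpha:=T_{\alpha\zeta}$ does not depend on $\zeta$, which is exactly (i). For regular $\kappa>\beta$, unions of the pairwise witnesses over any $\beta$ indices already suffice.

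\textbf{Step~3 of (i)$\Rightarrow$(ii) is also unjustified.} The same tree argument plus pigeonhole on $\mu<\beta$ proves $\eta\to(\beta)^2_\mu$ from the hypotheses alone, so this direction is true for trivial reasons. Your direct argument for it still fails. (i) gives you $T_\alpha$ with $|T_\alpha|<\kappa$, not a single point. At best you learn that each $\alpha$ sees fewer than $\kappa$ colours $c(\alpha,\zeta)$ forward, which is not homogeneity. If $\kappa>\mu$, then $T_\alpha=H_\alpha$ is allowed and you learn nothing. You would also need extra points in $X$ to serve as the upper bounds your relation $r$ is supposed to provide.
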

	
	\begin{theorem} [\cite{JJ2}]
		Let $\beta, \kappa$ and $\eta$ be cardinals such that $\omega \leq \beta \ll \eta$ with $\eta$ - regular and $\beta$ - singular. Let $X$ be a set of cardinality $\eta$. 
		The following statements are equivalent.
		\begin{itemize}
			\item [(i)]
			If in $X$ there exists a $\kappa$-strong sequences $(S_\alpha, H_\alpha)_{\alpha < \eta}$ with $|H_\alpha| \leq 2^\beta$ for all $\alpha < \eta$, then there exists a $\kappa$-strong sequence $(S_\alpha, T_\alpha)_{\alpha < \beta}$ with $T_\alpha \subseteq H_\alpha$ and $|T_\alpha|< \kappa$ for all $\alpha < \beta^+$ and $\kappa < \eta$.  
			\item [(ii)] 
			$(\kappa) \to (\beta^+)^{2}_{\beta}.$
		\end{itemize}
	\end{theorem}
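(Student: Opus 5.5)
Both directions reduce to the Erd\H{o}s--Rado style analysis behind a strong sequence, and I would follow the pattern used for the previous theorem from \cite{JJ2}. The statement is a Ramsey-type equivalence, so the natural dictionary is this: pairs $\alpha<\gamma$ correspond to the failure of directedness $S_\gamma\cup H_\alpha$, and colours correspond to the positions of witnesses inside $H_\alpha$. Since $\beta$ is singular, I would additionally decompose $\beta=\sup_{i<\operatorname{cf}\beta}\beta_i$ with regular $\beta_i^{+}$ and use the regular case for each $\beta_i$.

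\textbf{(ii)$\Rightarrow$(i).} Take a $\kappa$-strong sequence $(S_\alpha,H_\alpha)_{\alpha<\eta}$ with $|H_\alpha|\le 2^\beta$, and fix enumerations $H_\alpha=\{h^\alpha_\delta:\delta<2^\beta\}$. For each pair $\alpha<\gamma<\eta$, condition (2) of the definition gives some $h\in H_\alpha$ witnessing that $S_\gamma\cup H_\alpha$ is not $\kappa$-directed. I would colour the pair $\{\alpha,\gamma\}$ by a code of this witness. Coding it by its index in $2^\beta$ uses too many colours, so I would instead code it by a set of size $<\kappa$ together with the equivalence type of $h$ relative to a fixed partition of $2^\beta$ into $\beta$ pieces. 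To obtain that partition, I would use $\beta\ll\eta$: there are fewer than $\eta$ possible types, so by regularity of $\eta$ I can refine to a subsequence of length $\eta$ on which the types are stabilised, leaving only $\beta$ colours. Applying $(\kappa)\to(\beta^+)^2_\beta$ to the initial segment (here I would use $\kappa<\eta$ and view the first $\kappa$ indices) yields a homogeneous set $A$ of order type $\beta^+$. Enumerating $A$ increasingly as $\alpha_\xi$, I set $T_\xi$ to be the witness set of the common colour inside $H_{\alpha_\xi}$. This set has size $<\kappa$, and homogeneity guarantees that condition (2) survives for $\xi<\zeta$, while condition (1) is inherited because $T_\xi\subseteq H_{\alpha_\xi}$.

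\textbf{(i)$\Rightarrow$(ii).} Given a colouring $c\colon[\kappa]^2\to\beta$, I would build a set $X$ of size $\eta$ with a relation $r$ encoding $c$. The underlying set would be pairs $(\alpha,i)$, and I would declare $S_\gamma\cup H_\alpha$ non-directed exactly via the colour $c(\alpha,\gamma)$. Concretely, I would let $H_\alpha=\{(\alpha,i):i<\beta\}\cup\ldots$, padded up to size $2^\beta$, and let $(\alpha,i)\,r\,b_\gamma$ fail iff $i=c(\alpha,\gamma)$. Applying (i) gives a strong subsequence of length $\beta^+$ with $|T_\alpha|<\kappa$. A pressing-down/pigeonhole argument on the $<\kappa$ many colours appearing in the $T_\alpha$ should then extract a $c$-homogeneous set of size $\beta^+$. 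Here singularity of $\beta$ enters only through $\beta^+$ being regular, which is what the pigeonhole needs.

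\textbf{Main obstacle.} The hard step is the colour reduction in (ii)$\Rightarrow$(i): compressing witnesses from $2^\beta$ possibilities to $\beta$ colours without destroying the strong-sequence property. I would first try the canonical Erd\H{o}s--Rado tree argument, building end-homogeneous sets of size $(2^\beta)^+\le\eta$ using $\beta\ll\eta$. If that fails, I would fall back on choosing, for each $\alpha$, a minimal-index witness and colouring by its cofinality class below $\beta$.
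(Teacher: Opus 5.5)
The paper only quotes this result from [JJ2] and gives no proof, so your argument has to stand on its own. It breaks at the step you flag yourself. In (ii)$\Rightarrow$(i), suppose the colour of $\{\alpha,\gamma\}$ only records which of $\beta$ pieces $P_i$ of a partition of $2^\beta$ contains the witness. Then homogeneity only tells you that all witnesses attached to $\alpha_\xi$ lie in $H_{\alpha_\xi}\cap P_i$, and that set can have size $2^\beta$. Nothing in the coding yields $|T_\xi|<\kappa$. The ``stabilisation of types'' via regularity of $\eta$ is not meaningful either, because the types belong to pairs, not to single indices. Also, in the abstract setting, failure of directedness means that $S_\gamma\cup H_\alpha$ has no upper bound at all. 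It is not witnessed by a single $h\in H_\alpha$; that is special to the ${^\kappa}\kappa$ reading.

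The idea you are missing is that (ii) is a statement about $\kappa$ alone. Sierpi\'nski's lexicographic colouring shows that $(2^\beta)\to(\beta^+)^2_2$ fails, so $(\kappa)\to(\beta^+)^2_\beta$ fails for every $\kappa\le 2^\beta$. Erd\H{o}s--Rado gives it for every $\kappa\ge(2^\beta)^+$. So (ii) just says $\kappa>2^\beta$. Then $|H_\alpha|\le 2^\beta<\kappa$, and the first $\beta^+$ terms with $T_\alpha=H_\alpha$ already satisfy (i), reading $\kappa<\eta$ as a hypothesis as you do. Your bound $|T_\xi|<\kappa$ is true only for this reason, which your argument never uses, and the colouring is unnecessary.

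The direction (i)$\Rightarrow$(ii) has a genuine gap. First, (i) must be read as quantifying over all relations on $X$. For a fixed relation with no $\kappa$-strong sequence of length $\eta$, (i) holds vacuously while (ii) may fail. You change the relation silently. Moreover, making one designated $b_\gamma$ fail to bound one point does not make $S_\gamma\cup H_\alpha$ unbounded, so condition (2) is never verified.

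Worse, your structure cannot refute (i). All witnesses attached to $\alpha$ lie in $\{(\alpha,i):i<\beta\}$, so whenever $\kappa>\beta$ the conclusion of (i) holds with $T_\alpha=\{(\alpha,i):i<\beta\}$ for \emph{every} colouring $c$. Hence no homogeneous set can be read off. Your pigeonhole needs a single colour per $\alpha$ (end-homogeneity), which is exactly the missing Erd\H{o}s--Rado step. Condition (i) only bounds the number of colours per $\alpha$ by something $<\kappa$.

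By the above, what must actually be shown is that (i) fails for every $\kappa\le 2^\beta$. You need a relation with a $\kappa$-strong sequence of length $\eta$ whose sets $H_\alpha$, of size $\kappa\le 2^\beta$, are unbounded together with each later $S_\gamma$, although every part of size $<\kappa$ is bounded together with it. For regular $\kappa$, add for all $\alpha<\gamma$ and $i<\kappa$ a common bound of $S_\gamma$ and the first $i$ elements of $H_\alpha$. This uses essentially that directedness means ``has an upper bound''.

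Under the single-witness reading you use in the first direction, collecting one witness per $\gamma<\beta^+$ gives $|T_\alpha|\le\beta^+$, so (i) holds for all $\kappa>\beta^+$. For $\beta^+<\kappa\le 2^\beta$, which is consistent for singular $\beta$, (i)$\Rightarrow$(ii) would then simply be false.
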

	
	Hence, based on these results one can conclude that there are connections between notions from Theorem 1 and partition relations. This will be considered in the next paper which is now in preparation.
	\\\\
	\noindent
	\textbf{\Large{Declarations}}
	\\
	
	\textbf{Data Availability} Data sharing not applicable to this article as no databases were generated or analyzed
	during the current study.
	
	\textbf{Conflict of Interest} The author has no conflict of interest to declare that are relevant to the content of this
	study.

	\textbf{Funding} This research received no specific grant from any funding agency in the public, commercial, or not-for-profit sectors.

	\end{document}